\xdef\@endgadget#1{{\unskip\nobreak\hfil\penalty50\hskip1em\hbox{}\nobreak
    \hfil#1\parfillskip=0pt\finalhyphendemerits=0\par}}
\def\@qedsymbol{${}_\blacksquare$}
\def\qed{\@endgadget{\@qedsymbol}}
\newtheorem{theorem}{Theorem}
\newtheorem{definition}{Definition}
\newtheorem{remark}{Remark}
\newcommand{\mR}{\mathbb{R}}
\newcommand{\X}{\mathcal{X}}
\newcommand{\cS}{\mathcal{S}}
\newcommand{\bq}{\begin{equation}}
\newcommand{\eq}{\end{equation}}
\def\BibTeX{{\rm B\kern-.05em{\sc i\kern-.025em b}\kern-.08em
    T\kern-.1667em\lower.7ex\hbox{E}\kern-.125emX}}
\begin{document}

\title{\bfseries Limits to Energy Conversion}

\author{\IEEEauthorblockN{Arjan van der Schaft}
\IEEEauthorblockA{Bernoulli Institute for Mathematics, Computer Science and AI\\ 
University of Groningen\\ 
PO Box 407, 9700 AK, The Netherlands\\ 
E--mail: a.j.van.der.schaft@rug.nl}
\and
\IEEEauthorblockN{Dimitri Jeltsema} 
\IEEEauthorblockA{School of Engineering and Automotive\\ 
HAN University of Applied Science,\\ 
P.O.~Box 2217, 6802 CE Arnhem, The Netherlands\\  
E--mail: d.jeltsema@han.nl}
}

\maketitle

\begin{abstract}
A consequence of the Second Law of thermodynamics is that no thermodynamic system with a single heat source at constant temperature can convert heat into mechanical work in a recurrent manner. First we note that this is equivalent to cyclo-passivity at the mechanical port of the thermodynamic system, while the temperature at the thermal port of the system is kept constant. This leads to the general system-theoretic question which systems with two power ports have similar behavior: when is a system cyclo-passive at one of its ports, while the output variable at the other port (such as the temperature in the thermodynamic case) is kept constant? This property is called `one-port cyclo-passivity', and entails, whenever it holds, a fundamental limitation to energy transfer from one port (where the output is kept constant) to the other port. Sufficient conditions for one-port cyclo-passivity are derived for systems formulated in general port-Hamiltonian form. This is illustrated by a variety of examples from different {(multi-)}physical domains; from coupled inductors and capacitor microphones to synchronous machines.
\end{abstract}

\section{INTRODUCTION}

Energy conversion is a common phenomenon in many multi-physics systems: electro-mechanical, electro-chemical, thermal-mechanical, electro-kinetic, thermal diffusion, etc.. The Second Law of thermodynamics implies that no thermodynamic system with a single heat source at constant temperature can convert heat (thermal energy) into work (mechanical energy) in a recurrent manner. This gives rise to the natural question: 
\begin{center}
\emph{Are the limitations imposed by the Second Law of thermodynamics on energy conversion in thermodynamic systems also present in other cases?}
\end{center}
In this paper, we initiate a general system-theoretic treatment of this question along the following lines. Consider a cyclo-passive system with two power ports $(u_1,y_1)$ and $(u_2,y_2)$ schematically depicted in Fig.~\ref{fig:two-port}. 

\begin{figure}[h]
\begin{center}
\psfrag{u}[][]{$u_1$}
\psfrag{y}[][]{$y_1$}
\psfrag{v}[][]{$u_2$}
\psfrag{w}[][]{$y_2$}
\includegraphics[width=0.6\columnwidth]{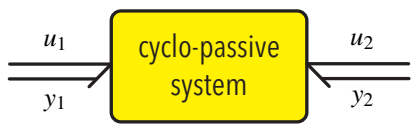}
\caption{Two-port cyclo-passive system.}
\label{fig:two-port}
\end{center}
\end{figure}

\noindent When is the system, constrained by imposing $y_1$ to be constant, cyclo-passive at the second port? We provide sufficient conditions for this to happen, and we illustrate this on a variety of examples from different physical domains. 

The remainder of the paper is organized as follows. In Section \ref{sec:diss_theory}, we start by recalling some well-known properties of dissipative systems and extend them with some recent results on cyclo-dissipativity. Section \ref{sec:thermo_mot} connects the notion of cyclo-dissipativity to the Second Law of thermodynamics and serves as a prime motivator for the main theorem presented in Section \ref{sec:one-port_pass}. Section \ref{sec:examples} illustrates the main result using a variety of examples from different {(multi-)}physical domains. Finally Section \ref{sec:conc} contains conclusions and outlook for further research.

\section{DISSIPATIVITY THEORY}\label{sec:diss_theory}

First we recall the basic definitions of dissipativity theory as originating from the seminal paper \cite{willems1972}, see also \cite{hillmoylan1980, passivitybook}. Consider a system with state vector $x$ and vector of external (e.g., input and output) variables $w$. Consider a scalar-valued {\it supply rate} $s(w)$. Then the system is said to be {\it dissipative} with respect to the supply rate $s$ if there exists a non-negative function $S(x)$ such that along all trajectories of the system and for all $t_1 \leq t_2$ and $x(t_1)$
\bq
\label{diss}
S(x(t_2)) - S(x(t_1)) \leq \int_{t_1}^{t_2} s(w(t)) dt,
\eq
and {\it lossless} if this holds with equality. Equation \eqref{diss} is referred to as the {\it dissipation inequality}. Interpreting $S(x)$ as stored 'energy' while at state $x$, and $s(w(t))$ as `power' supplied to the system at time $t$, this means that increase of the stored energy can only occur due to externally supplied power. Any function $S$ satisfying \eqref{diss} will be called a {\it storage function}. Dissipativity thus means the existence of a non-negative\footnote{Sometimes 'non-negativity' is taken as part of the definition of storage function. However because of the cyclo-dissipativity considerations later on we will deviate from this; see also \cite{vdscyclo}.} storage function. Furthermore, since addition of an arbitrary constant to a storage function again leads to a storage function, the requirement of non-negativity of $S$ can be relaxed to $S$ being {\it bounded from below}.

A variational characterization of dissipativity, in terms of the behavior of the $w$ trajectories (the external behavior of the system), is the following \cite{willems1972}. Define for any $x$ the {\it available storage}
\bq
S_a(x):= \sup_{w, \tau\geq 0} - \int_0^\tau s(w(t)) dt,
\eq
where the supremum is taken over all external trajectories $w(\cdot)$ of the system corresponding to initial condition $x(0)=x$, and all $\tau\geq0$. Obviously, $S_a(x)\geq 0$. Then the system is dissipative with respect to the supply rate $w$ if and only if $S_a(x)$ is finite for every $x$. Furthermore, if  $S_a(x)$ is finite for every $x$ then $S_a$ satisfies \eqref{diss}, and in fact $S_a$ is the {\it minimal non-negative storage function}; see \cite{passivitybook} for further ramifications. Note that, interpreting as above $s(w)$ in \eqref{diss} as 'power' supplied to the system, $S_a(x)$ is the maximal 'energy' that can be extracted from the system at initial condition $x$. Thus the system is dissipative if and only if the maximal 'energy' that can be extracted from the system starting from any initial state is finite.

Dropping the requirement of {\it non-negativity} of the storage function $S$ leads to the notion of {\it cyclo-passivity}, respectively {\it cyclo-losslessness}, cf. \cite{willems1973, hillmoylan1975,passivitybook,vdscyclo}. 




\begin{definition}
A system is {\it cyclo-dissipative} if 
\bq
\label{cyclic}
\int_{t_1}^{t_2}  s(w(t)) dt \geq 0,
\eq
for all $t_2\geq t_1$ and all external trajectories $w$ such that $x(t_2)=x(t_1)$. In case \eqref{cyclic} holds with equality, we speak about cyclo-losslessness.
Furthermore, the system is called cyclo-dissipative {\it with respect to} $x^*$ if \eqref{cyclic} holds for all $t_2\geq t_1$ and all external trajectories $w$ such that $x(t_2)=x(t_1)=x^*$. Finally, in case \eqref{cyclic} holds with equality for all $t_2\geq t_1$ and all external trajectories $w$ such that $x(t_2)=x(t_1)=x^*$, then the system is cyclo-lossless with respect to $x^*$.
\end{definition}
The following theorem in \cite{vdscyclo} extends the results obtained by Hill 
\& Moylan in their technical report \cite{hillmoylan1975} in a number of directions.
\begin{theorem}
\label{cycthm}
If there exists a, possibly indefinite, function $S$ satisfying the dissipation inequality \eqref{cyclic}, then the system is cyclo-dissipative.
Conversely, assume the system is reachable from some (ground) state $x^*$ and controllable to this same state $x^*$. Define the (possibly extended) functions $S_{ac}: \X \to \mR \cup \infty$ and $S_{rc} : \X \to - \infty \cup \mR$ as
\bq
\begin{aligned}
S_{ac}(x) &= \!\!\!\! \mathop{\sup_{w,T\geq 0}}_{x(0)=x,x(T)=x^*} \!\!\!\!\! - \int_0^Ts(w(t)) dt, \\[2mm]
S_{rc}(x) &= \!\!\!\!\!\! \mathop{\inf_{w,T\geq 0}}_{x(-T)=x^*,x(0)=x} \!\! \int_{-T}^0s(w(t)) dt,
\end{aligned}
\eq
where the supremum is taken over all external trajectories $w$.
Then the system is cyclo-dissipative with respect to $x^*$ if and only if
\bq
\label{acrc}
S_{ac}(x) \leq S_{rc}(x), \quad x \in \X.
\eq
In particular, if the system is cyclo-dissipative with respect to $x^*$ then both $S_{ac}$ and $S_{rc}$ are (indefinite) storage functions satisfying \eqref{diss}, and thus the system is cyclo-dissipative.
Furthermore, if the system is cyclo-dissipative with respect to $x^*$ then
\bq
S_{ac}(x^*) = S_{rc}(x^*)=0,
\eq
and any other (indefinite) storage function $S$ satisfies
\bq
\label{**}
S_{ac}(x) \leq S(x) - S(x^*) \leq S_{rc}(x). 
\eq
\end{theorem}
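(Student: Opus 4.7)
The plan is to establish each assertion in turn via concatenation arguments that glue trajectory pieces at the ground state $x^*$ and then pass to the appropriate sup/inf. The easy direction is immediate: if $S$ satisfies \eqref{diss} and $x(t_1)=x(t_2)$, then $S(x(t_2))-S(x(t_1))=0$ forces $\int_{t_1}^{t_2} s(w)\,dt\ge 0$, so the system is cyclo-dissipative (in particular, with respect to every $x^*$). For the remaining assertions, reachability from $x^*$ and controllability to $x^*$ guarantee that the feasible sets defining $S_{ac}(x)$ and $S_{rc}(x)$ are non-empty for every $x\in\X$, so these (possibly extended) quantities are well-defined.

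As a preliminary I would show $S_{ac}(x^*)=S_{rc}(x^*)=0$. The trivial trajectory of length zero gives $S_{ac}(x^*)\ge 0$ and $S_{rc}(x^*)\le 0$; conversely, every admissible trajectory in either definition is a closed trajectory at $x^*$, so cyclo-dissipativity w.r.t.\ $x^*$ forces $-\int s\,dt\le 0$ and $\int s\,dt\ge 0$, respectively. Now for the equivalence \eqref{acrc}: in the direction $(\Rightarrow)$, pick trajectories $w_1$ from $x^*$ to $x$ on $[-T,0]$ and $w_2$ from $x$ to $x^*$ on $[0,T']$; their concatenation is a closed trajectory at $x^*$, so cyclo-dissipativity yields
\bq
\int_{-T}^{0}s(w_1)\,dt \;\ge\; -\int_{0}^{T'}s(w_2)\,dt.
\eq
Taking the infimum on the left and supremum on the right gives $S_{rc}(x)\ge S_{ac}(x)$. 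Conversely, any closed trajectory at $x^*$ passes through some state $x$ at an intermediate time $t$; the inequalities $\int_{t_1}^{t}s\,dt\ge S_{rc}(x)$ and $\int_{t}^{t_2}s\,dt\ge -S_{ac}(x)$ sum to give non-negativity of the total supply under \eqref{acrc}.

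To see that $S_{ac}$ and $S_{rc}$ are themselves (indefinite) storage functions, I would use concatenation a third time. Given an arbitrary trajectory $x(t_1)\to x(t_2)$, extend it by an $\varepsilon$-suboptimal trajectory $x(t_2)\to x^*$ to produce a candidate in the supremum defining $S_{ac}(x(t_1))$; this yields
\bq
S_{ac}(x(t_1))\;\ge\; -\!\int_{t_1}^{t_2}\! s(w)\,dt \;+\; S_{ac}(x(t_2))-\varepsilon,
\eq
and letting $\varepsilon\downarrow 0$ gives the dissipation inequality for $S_{ac}$; the argument for $S_{rc}$ is symmetric, prepending an $\varepsilon$-suboptimal trajectory $x^*\to x(t_1)$. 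The bounds \eqref{**} then follow by applying the dissipation inequality for any storage function $S$ along trajectories $x\to x^*$ (for the left inequality) and $x^*\to x$ (for the right), and optimizing.

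The main obstacle is the bookkeeping around $\varepsilon$-suboptimality when the sup/inf are not attained, and ensuring that the concatenated trajectories are admissible in the behavior — this is precisely where the reachability/controllability assumption from $x^*$ is essential, both to keep the feasible sets non-empty and to justify the splicing operations used throughout.
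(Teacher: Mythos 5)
Your proposal is correct: the easy direction matches the paper's own one-line argument (substitute $x(t_1)=x(t_2)$ into \eqref{diss}), and the remaining assertions---which the paper itself does not prove but delegates to the cited reference \cite{vdscyclo}---are handled by your concatenation-at-$x^*$ and $\varepsilon$-suboptimality arguments, which is precisely the standard route (going back to Willems and Hill--Moylan) for establishing such variational characterizations. Your closing caveat identifies the right technical points: time-invariance and the state-concatenation property are what make the splicing admissible, while reachability from and controllability to $x^*$ give non-emptiness of the feasible sets, so that $S_{ac}(x)>-\infty$ and $S_{rc}(x)<+\infty$ always, and hence both are finite (genuine storage functions) whenever \eqref{acrc} holds.
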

The first statement in this theorem (existence of indefinite storage function implies cyclo-dissipativity) simply follows by substituting $x(t_1)=x(t_2)$ in \eqref{diss}. Furthermore, note that, unlike the dissipativity case, it may be possible for a cyclo-dissipative system to extract an infinite amount of energy from the system (since the storage function may not be bounded from below).

In case of the special {\it passivity} supply rate $s(w)=s(u,y)=y^Tu$, where $w=(u,y)$ and $u$ and $y$ are equally dimensioned vectors, the terminology `dissipativity' in all of the above is replaced by the classical terminology of {\it passivity}. In this case $u$ and $y$ typically are vectors of power-conjugate variables, like forces and velocities, and voltages and currents.

\section{MOTIVATION FROM CLASSICAL THERMODYNAMICS}\label{sec:thermo_mot}

Let us consider a macroscopic thermodynamic system with two external ports\footnote{The discussion can be easily extended to cases where the mechanical port is replaced by, or extended to, other types of ports; e.g., chemical, electrical.}. The second port is the {\it mechanical\footnote{In some cases better called {hydraulic}.} port}, with port variables being the pressure\footnote{In order to stick with the usual notation in thermodynamics we follow the physics convention, where $Pu_V$ is the mechanical work exerted {\it by} the thermodynamic system {\it on} the environment.} $-P$ and the rate of volume change $u_V:=\dot{V}$, where $V$ is the volume. The instantaneous power given by the environment to the thermodynamic system is thus
\bq
-Pu_V = \mbox{ rate of mechanical work}
\eq
The first port is the {\it thermal port}, where the thermodynamic system is connected to a heat source (heat bath), with port variables the temperature $T$ and the heat flow $q$ (heat per second) from the heat source into the system.

The {\it First Law of thermodynamics} is expressed by assuming the existence of a function $E$ of the state $x$ ('total energy') of the thermodynamic system, satisfying along all trajectories
\bq
\begin{aligned}
\label{firstlaw}
E(x(t_2)) - E(x(t_1)) =  \int_{t_1}^{t_2}  q(t) - P(t)u_V(t) dt \\[2mm]
\left(  =  \int_{t_1}^{t_2} q(t) dt - \int_{t_1}^{t_2} P(t)dV(t) \right)
\end{aligned}
\eq
for all $t_1 \leq t_2$.
That is, the increase of the total energy $E$ of the thermodynamic system is equal to the incoming heat (through the thermal port) minus the mechanical work performed by the system on the environment (through the mechanical/hydraulic port). 

Clearly, the First Law of thermodynamics can be equivalently expressed as the cyclo-losslessness of the system with respect to the supply rate $s(q,T,P,u_V)=q - Pu_V$, with storage function $E$. 
Furthermore, in many situations the function $E$ is bounded from below, in which case it can be turned into a non-negative storage function by adding a suitable constant, implying losslessness. Note that for a general thermodynamic system, however, there is no reason why $E$ should be bounded from below.
%
%

The (cyclo-)dissipativity interpretation of the {\it Second Law of thermodynamics} is less clear. The formulation of the Second Law given by Lord Kelvin states that (see e.g. \cite{fermi}):

\smallskip
{\it A transformation of a thermodynamic system whose only final result is to transform into work heat extracted from a source which is at the same temperature throughout is impossible. }

\smallskip

Based on the Second Law, the standard argumentation in classical thermodynamics, see e.g. \cite{fermi}, is to derive, by using the {\it Carnot cycle}, the inequality
\bq
\label{secondlaw}
\oint \frac{q(t)}{T(t)} dt \leq 0
\eq
for all cyclic processes, where equality holds for so-called {\it reversible} cyclic processes. Furthermore, based on this, one defines the {\it entropy} $\cS$ as a function of the state of the thermodynamic system, and derives the {\it Clausius inequality} 
\bq
\label{clausius}
\cS(x(t_2))-\cS(x(t_1)) \geq \int_{t_1}^{t_2} \frac{q(t)}{T(t)} dt.
\eq
This immediately leads to the dissipativity formulation of the Second Law as given in \cite{willems1972}, see also \cite{haddad}. Indeed, \eqref{secondlaw} is the same as saying that the thermodynamic system is {\it cyclo-dissipative} with respect to the supply rate $s(q,T,P,u_V)=-\frac{q}{T}$, while \eqref{clausius} is equivalent to saying that the system is cyclo-dissipative with respect to the supply rate $-\frac{q}{T}$ and storage function $-\cS$. 

In quite a few cases the entropy function $\cS$ is {\it bounded from above}, and thus the storage function $-\cS$ is bounded from below; in which case {\it cyclo}-dissipativity can be replaced by dissipativity. However, similar to the cyclo-passivity or passivity interpretation of the First Law, this is {\it not} always the case.

On the other hand, there is {\it another} (but of course related) cyclo-dissipativity aspect of the Second Law which will serve as a prime motivation for this paper. Indeed, without taking recourse to the definition of the entropy $\cS$ via the Carnot cycle and the Clausius inequality, it is evident that Kelvin's formulation of the Second Law also has the following immediate consequence\footnote{But note that Kelvin's formulation of the Second Law is {\it stronger} than this consequence, since Kelvin's formulation allows for interaction of the system with multiple heat sources of {\it different} temperatures, where however the heat converted into mechanical energy is eventually only extracted from a single heat source.}, which admits a cyclo-dissipativity interpretation.
In fact, since the rate of mechanical work on the system is given by $- Pu_{V}$, Kelvin's formulation of the Second Law {\it implies} that the thermodynamic system with heat source at arbitrary but {\it fixed} temperature $T$ is {\it cyclo-passive} with respect to the power-conjugate variables $-P,u_V$, i.e.,
\bq
 \oint -P(t) u_V(t) dt \geq 0
\eq
along all cyclic processes. This means that in order to convert heat into mechanical work in a recurrent manner, at least two temperatures are needed (such as in the construction of the Carnot cycle between two heat sources of different temperatures). Hence the Second Law has an immediate consequence for the limitations to {energy conversion from the thermal port of the thermodynamic system to its mechanical port}.

Furthermore, if we assume the existence of an (indefinite) storage function (as e.g. implied by Theorem \ref{cycthm} if the system is reachable from and controllable to a ground state) this leads to the following cyclo-dissipativity consequence of the Second Law:

\smallskip
{\it Consider a thermodynamic system with a thermal port $(T,q)$ and a mechanical port $(-P,u_V)$. Then for every constant temperature $T$ the thermodynamic system is cyclo-passive with respect to the passivity supply rate $-Pu_V$; that is, for every $T$ there exists a function $F_T$ of the state $x$ of the thermodynamic system satisfying}
\bq
\label{helmholtz}
F_T(x(t_2)) - F_T(x(t_1))  \leq  \int_{t_1}^{t_2}-P(t)u_V(t) dt.
\eq
In fact, the function $F(x,T):=F_T(x)$ is known in thermodynamics as the (Helmholtz) {\it free energy}. For example, for a gas, given the energy $E(V,\cS)$ expressed as a function of the volume $V$ and the entropy $\cS$, the Helmholtz function is given as 
\bq
F(V,T) = E(V,\cS) - T\cS, \quad T=\frac{\partial E}{\partial \cS}(V,\cS),
\eq
where $\cS$ is solved from the equation $T=\frac{\partial E}{\partial \cS}(V,\cS)$. Thus $F(V,T)$ is the partial Legendre transform of $E(V,\cS)$ with respect to the entropy $\cS$.

From a general system-theoretic point of view this leads to the following question. Consider a {\it general} two-port cyclo-passive system as in Figure \ref{fig:two-port}. Under which conditions is it {\it not} possible to transform, in a recurrent manner, energy at port $1$ into energy at port $2$ while keeping $y_1$ constant? Or said differently, what is so special about thermodynamic systems and their limitations for converting heat into mechanical work, and are there any other systems than thermodynamic systems that can{\it not} transform energy from one port into the other while keeping the output at the first port constant? This question will be addressed in the next section.

\section{ONE-PORT CYCLO-PASSIVITY}\label{sec:one-port_pass}

Consider a general cyclo-passive physical system. It is well-known (see, e.g., \cite{jeltsema} and \cite{passivitybook}) that any such physical system can be naturally modeled as a {\it port-Hamiltonian system}. Throughout the rest of this paper we will concentrate on the class of input-state-output port-Hamiltonian systems without feedthrough \cite{passivitybook} given as
\bq
\label{pH}
\begin{aligned}
\dot{x} & =  J(x)e - \mathcal{R}(x,e) + G(x)u, \quad e= \frac{\partial H}{\partial x}(x), \\[2mm]
y & = G^T(x)\frac{\partial H}{\partial x}(x), \quad x \in \X,
\end{aligned}
\eq
with $n$-dimensional state space $\X$, Hamiltonian $H: \X \to \mR$, skew-symmetric matrix $J(x)$, and energy-dissipation mapping $\mathcal{R}$ satisfying
\bq
e^T \mathcal{R} (x,e) \geq 0, \mbox{ for all } x,e.
\eq
Furthermore, $u \in \mR^m$ denotes the vector of inputs and $y \in \mR^m$ the vector of outputs, together defining the port $(u,y)$. Obviously, port-Hamiltonian systems \eqref{pH} satisfy
\bq
\frac{d}{dt} H(x)= e^TJ(x)e - e^T \mathcal{R} (x,e) + e^TG(x)u \leq y^Tu,
\eq
with $y^Tu$ denoting the externally supplied power to the system. Hence port-Hamiltonian systems are {\it cyclo-passive} with (possibly indefinite) storage function $H$. 

In order to study limits to energy conversion we will further assume that the port-Hamiltonian system \eqref{pH} has {\it two} ports $(u_1,y_1)$ and $(u_2,y_2)$ as in Figure \ref{fig:two-port}. Consequently
\bq
\frac{d}{dt} H(x) \leq y_1^Tu_1 + y_2^Tu_2
\eq
Which conditions ensure that, while keeping $y_1$ constant, no energy can be transported from port $1$ to port $2$ in a recurrent manner? 

Before we formulate our main theorem let us recall the partial Legendre transformation. Consider a real-valued function $H(x_1,x_2)$ of the vectors $x_1$ and $x_2$, where it is assumed that the Hessian $\frac{\partial^2 H}{\partial x_1^2}(x_1,x_2)$ has full rank everywhere. Then the {\it partial Legendre transform}\footnote{Note that the Legendre transformation \eqref{legendre} adheres to the same sign convention as for defining the thermodynamic potentials, such as the Helmholtz function, in thermodynamics. This in contrast with the opposite sign convention in mechanics (from Lagrangian to Hamiltonian function) and convex analysis.} of $H$ with respect to $x_1$ is defined as
\begin{equation}
\label{legendre}
H^{*}_1(e_1,x_2) = H(x_1,x_2) - e_1^Tx_1, \quad e_1= \frac{\partial H}{\partial x_1}(x_1,x_2),
\end{equation}
where $x_1$ is expressed as a function of $e_1,x_2$ by means of the equation $e_1= \frac{\partial H}{\partial x_1}(x_1,x_2)$ (which is locally guaranteed by the full rank assumption on the Hessian matrix). The following properties of the partial Legendre transformation are well-known:
\bq
\label{legendreprop}
\frac{\partial H^{*}_1}{\partial e_1}(e_1,x_2)= - x_1, \quad
\frac{\partial H^{*}_1}{\partial x_2}(e_1,x_2)= \frac{\partial H}{\partial x_2}(x_1,x_2). 
\eq


\begin{theorem}\label{theorem}
Consider a port-Hamiltonian system \eqref{pH} with two ports $(u_1,y_1)$ and $(u_2,y_2)$. Suppose the state vector $x$ can be split as 
\begin{equation*}
x=\begin{bmatrix} x_1 \\ x_2 \end{bmatrix} 
\end{equation*}
in such a way that the port-Hamiltonian system takes the form
\bq
\label{form}
\begin{aligned}
\dot{x}_1 & = J_1(x_1,x_2)e_1 - \mathcal{R}_1(x_1,x_2,e_1) + G_1u_1,\\[1em]
\dot{x}_2 & = J_2(x_1,x_2)e_2 - \mathcal{R}_2(x_1,x_2,e_2) + G_2(x_1,x_2)u_2,\\[0.5em]
y_1 & = G_1^Te_1, \qquad \quad e_1= \frac{\partial H}{\partial x_1}(x_1,x_2), \\
y_2 & = G_2^T(x_1,x_2)e_2, \quad  e_2= \frac{\partial H}{\partial x_2}(x_1,x_2),
\end{aligned}
\eq
where $G_1$ is an invertible square constant matrix. Then, whenever $y_1=\bar{y}_1$ with $\bar{y}_1$ constant, also $e_1$ is equal to a constant $e_1=\bar{e}_1$, and furthermore
\bq
\frac{d}{dt} H^{*}_1(\bar{e}_1,x_2) \leq y^T_2u_2.
\eq
This implies that the system for any constant $y_1=\bar{y}_1$ is {\it cyclo-passive} at the second port $(u_2,y_2)$, with respect to the storage function $H^{*}_1(\bar{e}_1,x_2)$ (regarded as function of $x_2$).
\end{theorem}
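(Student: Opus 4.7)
The plan is three steps: first show $e_1$ is forced constant; then compute $\frac{d}{dt}H^*_1(\bar e_1, x_2)$ via the partial-Legendre identities; finally bound the result by $y_2^T u_2$ using the port-Hamiltonian structure of the second block.

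Step one is immediate: the output equation $y_1 = G_1^T e_1$ with $G_1$ constant, square and invertible inverts to $e_1 = G_1^{-T} y_1$, so $y_1 \equiv \bar y_1$ forces $e_1 \equiv \bar e_1 := G_1^{-T}\bar y_1$ along any constrained trajectory. For step two I differentiate $H^*_1(\bar e_1, x_2(t))$ by the chain rule. The $e_1$-partial drops out because $\bar e_1$ is constant, and the second identity in \eqref{legendreprop} gives $\frac{\partial H^*_1}{\partial x_2}(\bar e_1, x_2) = \frac{\partial H}{\partial x_2}(x_1, x_2) = e_2$, so $\frac{d}{dt} H^*_1(\bar e_1, x_2) = e_2^T \dot x_2$. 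This is the payoff of the partial Legendre transform: it trades the $x_1$-varying bookkeeping for the $e_1$-fixed bookkeeping that the constraint $y_1 \equiv \bar y_1$ actually delivers.

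For step three I substitute the second line of \eqref{form}:
\begin{equation*}
e_2^T \dot x_2 = e_2^T J_2(x_1,x_2) e_2 - e_2^T \mathcal{R}_2(x_1,x_2,e_2) + e_2^T G_2(x_1,x_2) u_2.
\end{equation*}
The last term equals $y_2^T u_2$ by definition of $y_2$. The first vanishes and the second is non-positive, both read off from the global port-Hamiltonian structure: the block form of \eqref{form} says $J(x)$ is block-diagonal, so its diagonal block $J_2$ inherits skew-symmetry from $J$ and $e_2^T J_2 e_2 = 0$; and since $\mathcal{R}$ splits as $\bigl(\mathcal{R}_1(x,e_1),\mathcal{R}_2(x,e_2)\bigr)$, setting $e_1=0$ in the global inequality $e^T \mathcal{R}(x,e)\geq 0$ yields $e_2^T \mathcal{R}_2(x,e_2)\geq 0$. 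Assembling these, $\frac{d}{dt} H^*_1(\bar e_1, x_2) \leq y_2^T u_2$, which is precisely cyclo-passivity at the second port with storage $H^*_1(\bar e_1,\cdot)$; integrating over any cyclic $x_2$-trajectory yields $\oint y_2^T u_2\, dt \geq 0$.

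The only subtle point is the inference in step three: the theorem \emph{assumes} the split form \eqref{form} rather than deriving it, but the skew-symmetry of $J_2$ and the partial inequality $e_2^T \mathcal{R}_2 \geq 0$ follow once one observes that \eqref{form} is consistent with the original $J$ being skew and $\mathcal{R}$ being dissipative. Everything else reduces to the chain rule and the identities \eqref{legendreprop}.
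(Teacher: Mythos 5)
Your proof is correct and follows essentially the same route as the paper's: invert $y_1=G_1^Te_1$ to fix $e_1=\bar e_1$, use the Legendre identities \eqref{legendreprop} so that $\frac{d}{dt}H^*_1(\bar e_1,x_2)=e_2^T\dot x_2$, then substitute the $x_2$-dynamics and discard $e_2^TJ_2e_2=0$ and $-e_2^T\mathcal{R}_2\leq 0$. The only difference is that you explicitly justify $e_2^T\mathcal{R}_2(x,e_2)\geq 0$ (by setting $e_1=0$ in the global inequality $e^T\mathcal{R}(x,e)\geq 0$) and the skew-symmetry of the diagonal block $J_2$, both of which the paper's proof takes as implicit in the form \eqref{form}.
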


\begin{proof}
By \eqref{legendreprop} 
\bq
\frac{d}{dt} H^{*}_1 =  - x^T_1 \dot{e}_1 + \frac{\partial H}{\partial x^T_2}(x_1,x_2)\dot{x}_2.
\eq
Hence, for constant $y_1=\bar{y}_1$ (and thus constant $e_1$) we obtain
\begin{align}
\label{L}
\frac{d}{dt} H^{*}_1 & = e^T_2 \big[J_2(x_1,x_2)e_2 - \mathcal{R}_2(x_1,x_2,e_2) + G_2(x_1,x_2)u_2 \big] \nonumber\\[2mm]
&= - e_2^T\mathcal{R}_2(x_1,x_2,e_2) + y^T_2u_2 \leq y^T_2u_2.
\end{align}
\end{proof}

The property of being cyclo-passive at the second port $(u_2,y_2)$ for any constant value $\bar{y}_1$ of the output $y_1$ of the first port will be called {\it one-port cyclo-passivity} at the second port.

\begin{definition}
Consider a cyclo-passive system with two ports $(u_1,y_1)$ and $(u_2,y_2)$, i.e., there exists a function $S: \X \to \mathbb{R}$ such that along every solution
\bq 
\frac{d}{dt}S \leq y_1^Tu_1 + y_2^Tu_2.
\eq
Then, the system is said to be {\it one-port cyclo-passive} at the port $(u_2,y_2)$ if for every constant $\bar{y}_1$ there exists an $S_1(x)$ such that for all trajectories for which $y_1(t)=\bar{y}_1$
\bq 
\frac{d}{dt} S_1 \leq y_2^Tu_2.
\eq
\end{definition}
Hence, Theorem \ref{theorem} states that any port-Hamiltonian system in the form \eqref{form} is one-port cyclo-passive at the second port $(u_2,y_2)$, with storage function given by the partial Legendre transform $H_1^*$ of $H$ with respect to $x_1$.
\begin{remark}\rm
Restricting to the partial Legendre transform $H_1^*$ as candidate storage function it can be seen that the sufficient conditions of Theorem \ref{theorem} are close to necessary as well. For example, if there are off-diagonal blocks $J_{12}=-J^T_{21}$ in the $J$-matrix then \eqref{L} extends to
\bq
\begin{aligned}
\frac{d}{dt} H^{*}_1  & = e^T_2 \big[J_2(x_1,x_2)e_2  + J_{21}(x_1,x_2)e_1- \mathcal{R}_2(x_1,x_2,e_2)\big] \\[2mm]
& \quad + \, G_2(x_1,x_2)u_2,
\end{aligned}
\eq
for constant $e_1$, which is $\leq y^T_2u_2$ for all $e_1,e_2$ if and only if $J_{12}=-J^T_{21}=0$.
\end{remark}
\begin{remark}\rm
\label{remark}
A somewhat complementary (and easier) statement, which does not require the Legendre transformation of $H$, is that by selecting $u_1$ such that $\dot{x}_1$ in \eqref{form} is zero, and thus $x_1=\bar{x}_1$ is constant, then the system at the $(u_2,y_2)$-port is cyclo-passive with storage function $H(\bar{x}_1,x_2)$. Indeed, in this case 
\bq
\frac{d}{dt}H(\bar{x}_1,x_2) \leq y^T_2u_2.
\eq
A similar situation will be encountered in Section \ref{subsec:HEX}.
\end{remark}

\section{EXAMPLES}\label{sec:examples}

In this section, a variety of examples from different physical domains are considered from the perspective of Theorem \ref{theorem}.

\subsection{Ideal gas}
\label{gas}
The dynamics of a gas are described by
\bq
\begin{aligned}
\dot{V} & = u_V,\\[1em]
\dot{S} & = u_S,\\[0.5em] 
y_V &= -P = \frac{\partial E}{\partial V}(V,S),\\[0.5em]
y_S &= T = \frac{\partial E}{\partial S}(V,S),
\end{aligned}
\eq
where $E(V,S)$ is the energy of the gas, $u_V$ is the rate of extension of the volume, and $u_S$ is the entropy flow. It is assumed that the entropy flow is given by $u_S = \frac{q}{T}$, i.e., only {\it reversible} processes are considered. In case of an ideal gas \cite{fermi, kondepudi}
\bq
E(V,S)= \frac{C_Ve^{\frac{S}{C_V}}}{Ve^{\frac{R}{C_V}}},
\eq
where $C_V$ denotes the heat capacity (at constant volume), and $R$ is the universal gas constant. 

Clearly, this thermodynamic system satisfies the conditions of Theorem \ref{theorem}, and thus the system is one-port cyclo-passive (in fact, {\it cyclo-lossless}) at the {\it mechanical} port for any constant temperature $T=\bar{T}$, with storage function given by the Helmholtz function $F(V,\bar{T})$, which for any $T$ is given as \cite{fermi}
\bq
F(V,T) = C_VT + W - T\big(C_V \ln T + R \ln V +a\big),
\eq
with $a$ the entropy constant of the gas, and $W$ a constant of integration. 

Note that the system also satisfies the conditions of Theorem \ref{theorem} with the two ports reversed. Thus, the system is also one-port lossless at the {\it thermal} port for any constant pressure $P$. 

From the point of view of the well-known {\it Carnot cycle} \cite{fermi, kondepudi} we notice that dynamics for $T$ constant (the case covered by Theorem \ref{theorem}) corresponds to an {\it isothermal} process, while the dynamics for $S$ constant (the case covered by Remark \ref{remark}) corresponds to an {\it isentropic} (or, adiabatic) process.
%

\subsection{DC-motor}
The standard model of a DC-motor as depicted in Fig.~\ref{fig:dcmotor} is given in port-Hamiltonian formulation as \cite{jeltsema}
\bq
\label{eq:pHDCmotor}
\begin{aligned}
\begin{bmatrix}
\dot{\varphi}\\
\dot p
\end{bmatrix}
&=
\left(
\begin{bmatrix}
0 & -K\\
K & 0
\end{bmatrix}
-
\begin{bmatrix}
R & 0\\
0 & b
\end{bmatrix}
\right)
\begin{bmatrix}
\dfrac{\varphi}{L}\\[1em] 
\dfrac{p}{J} 
\end{bmatrix}
+
\begin{bmatrix}
1 & 0 \\ 0 & 1
\end{bmatrix}
\begin{bmatrix} V \\ \tau \end{bmatrix}, \\
\begin{bmatrix} I \\ \omega \end{bmatrix} &=
\begin{bmatrix}
  1 & 0 \\0 & 1
\end{bmatrix}
\begin{bmatrix}
\dfrac{\varphi}{L}\\[1em] 
\dfrac{p}{J} 
\end{bmatrix}, \quad H(\varphi,p) = \frac{\varphi^2}{2L} + \frac{p^2}{2J},
\end{aligned}
\eq
where the state variables are the flux linkage $\varphi$ and the angular momentum $p$. The inductance is $L$, the moment of inertia $J$, while $K \neq 0$ is the gyration constant, which is responsible for the conversion of electrical power into mechanical power and conversely. The electrical port is described by the pair $(V,I)$, i.e., the voltage and the current, while the mechanical port is $(\tau, \omega)$, i.e., the torque and the angular velocity. 

\begin{figure}[h]
\centering
\psfrag{U}[][]{$V$}
\psfrag{i}[][]{$I$}
\psfrag{J}[][]{$J$}
\psfrag{b}[][]{$b$}
\psfrag{R}[][]{$R$}
\psfrag{L}[][]{$L$}
\psfrag{K}[][]{$K$}
\psfrag{w}[][]{$\omega$}
\psfrag{T}[][]{$\tau$}
\psfrag{+}[][]{$+$}
\psfrag{-}[][]{$-$}
\includegraphics[width=6cm]{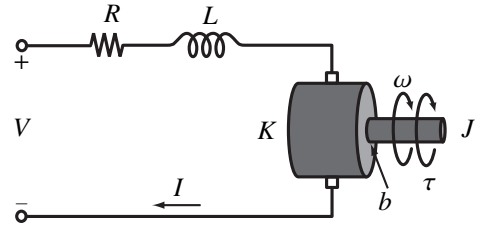}
\caption{DC motor.}
\label{fig:dcmotor}
\end{figure}
Note that the same model can be used for the operation of the DC-motor as a {\it dynamo}; converting instead mechanical energy into electrical energy.

The partial Legendre transforms of the total energy $H$ with respect to $\varphi$, respectively $p$, are given as 
\bq
H_1^*(I,p) =  \frac{p^2}{2J} - \frac{1}{2}LI^2, \quad H_2^*(\varphi,\omega) =  \frac{\varphi^2}{2L} - \frac{1}{2}J\omega^2.
\eq
In both cases the system does {\it not} satisfy the conditions of Theorem \ref{theorem}. In fact, the system is {\it not} one-port cyclo-passive, either at the mechanical port (if $\bar{I} \neq 0$ ) or at the electrical port (if $\bar{\omega} \neq 0$). Indeed, consider the system with $I=\bar{I} \neq 0$. Then a storage function $S(p)$ for this constrained system should satisfy
\bq
\frac{d}{dt} S = \frac{dS}{dp} \left[K \frac{\bar{\varphi}}{L} - b \frac{p}{J} + \tau \right] \leq \omega \tau
\eq
for all $\tau$, where $\frac{\bar{\varphi}}{L}=\bar{I}$. It follows that $\frac{dS}{dp}=\omega$, and thus that 
\begin{equation*}
S(p)= \frac{p^2}{2J} + \text{const}. 
\end{equation*}
After substitution this implies
\bq
\frac{dS}{dp} \left[K \bar{I} - b\frac{p}{J}\right]= \omega K\bar{I} -b\omega^2 \leq 0
\eq
for all $\omega$. However, this can only be true whenever either $\bar{I}=0$ or $K=0$. Thus the system is {\it not} one-port cyclo-passive at the mechanical port for $I=\bar{I} \neq 0$. 

A similar argument can be used for the case $\omega=\bar{\omega} \neq 0$, showing that the system is not one-port cyclo-passive at the electrical port as well. Thus the DC-motor can be used for converting electrical energy into mechanical energy, as well as (in dynamo mode) for converting mechanical into electrical energy; in agreement with common usage. 

\subsection{Adjustable spring}

Consider a spring with extension $q$ satisfying Hooke's law, where the spring stiffness $k$ is adjustable as a function of time; see Fig.~\ref{fig:varspring}. 

\begin{figure}[h]
\begin{center}
\psfrag{F}[][]{$F$}
\psfrag{K}[][]{$k(t)$}
\psfrag{q}[][]{$q$}
\includegraphics[width=0.5\columnwidth]{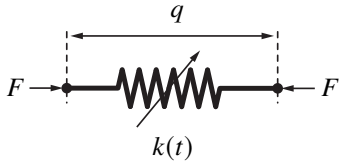}
\end{center}
\caption{A linear adjustable spring.}
\label{fig:varspring}
\end{figure}

\noindent This defines the port-Hamiltonian system
\bq
\begin{aligned}
\dot{q} & = v,\\[1em]
\dot{k} & = u,\\[0.66em]
F &= \frac{\partial H}{\partial q}(q,k),\\[0.5em]
y &= \frac{\partial H}{\partial k}(q,k),
\end{aligned}
\eq
with Hamiltonian $H(q,k)=\frac{1}{2}kq^2$, inputs $v,u$, and outputs $F,y$. In robotics this is used in 'variable stiffness control'; see e.g. Note 6.13 in \cite{passivitybook}.

Although the system is in the form \eqref{form}, it does {\it not} satisfy the conditions of Theorem \ref{theorem} for cyclo-passivity at the $(v,F)$-port, since the partial Legendre transformation of $H(q,k)$ with respect to $k$ is singular. Indeed, since $H(q,k)$ is homogeneous of degree one in $k$, the Hessian 
\begin{equation*}
\frac{\partial^2 H}{\partial k^2}(q,k) = 0, 
\end{equation*}
and thus the partial Legendre transform with respect to $k$ is zero.

On the other hand, the partial Legendre transformation with respect to $x$ is given as $H^*(F,k)=- \frac{1}{2k}F^2$, and the system for constant $F$ is one-port cyclo-passive (in fact, cyclo-lossless) at the $(u,y)$-port. 

\subsection{Coupled inductors / transformer}

A pair of magnetically coupled inductors can be considered as a non-ideal AC transformer \cite{Desoer}; see Fig.~\ref{fig:transformer}.

\begin{figure}[h]
\begin{center}
\psfrag{a}[][]{$V_1$}
\psfrag{b}[][]{$I_1$}
\psfrag{c}[][]{$I_2$}
\psfrag{d}[][]{$V_2$}
\psfrag{L}[][]{$L_1$}
\psfrag{K}[][]{$L_2$}
\psfrag{m}[][]{$M$}
\includegraphics[width=0.45\columnwidth]{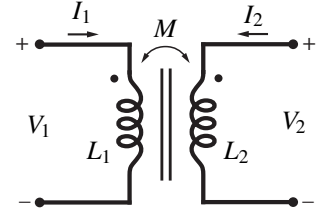}
\end{center}
\caption{Two magnetically coupled inductors.}
\label{fig:transformer}
\end{figure}

Let $\psi_k$, with $k=1,2$, represent the associated flux-linkages, the port-Hamiltonian dynamics is given by
\bq \label{eq:trans_states}
\begin{aligned}
\dot{\psi}_k &= V_k, \quad k=1,2,
\end{aligned}
\eq
together with the outputs
\bq
\begin{aligned}
I_k &= \frac{\partial H}{\partial \psi_k},  \quad k=1,2,
\end{aligned}
\eq
and as Hamiltonian the total stored magnetic energy 
\bq
H(\psi_1,\psi_2) = \frac{1}{2}[\psi_1 \ \psi_2] \begin{bmatrix} L_1 & M\\M & L_2 \end{bmatrix}^{-1} \begin{bmatrix} \psi_1 \\ \psi_2 \end{bmatrix}.
\eq
Under the condition that ${|M| \in \left[0,\sqrt{L_1L_2}\,\right)}$ (imperfect coupling), the Hessian of $H$ has full rank. Hence, the partial Legendre transform of $H$ with respect to $\psi_1$ reads
\begin{equation*}
\begin{aligned}
H_1^*(I_1,\psi_2) &= H(\psi_1,\psi_2) - I_1 \psi_1\Big|_{\psi_1 = L_1 I_1 + \frac{M}{L_2}(\psi_2 - MI_1)}\\
&= \frac{M^2 - L_1L_2}{2L_2}I_1^2 - \frac{M}{L_2}I_1\psi_1 + \frac{1}{2L_2}\psi_2^2,
\end{aligned}
\end{equation*}
which, using \eqref{eq:trans_states}, readily implies that for any $I_1 = \bar{I}_1$
\bq
\frac{d}{dt}H_1^*(\bar{I}_1,\psi_2) = V_2I_2.
\eq
I.e., a non-ideal AC transformer with a constant primary current is one-port cyclo-passive (in fact, lossless) with respect to the secondary port $(V_2,I_2)$. The same holds for the primary port $(V_1,I_1)$ when the secondary current is held constant, i.e., for any $I_2 = \bar{I}_2$. Physically this is obvious since a transformer cannot transfer a DC current.  

\subsection{Capacitor microphone}

Consider a capacitor microphone depicted in Fig.~\ref{fig:CapMic}. The capacitance $C(q) > 0$ is varying as a function of the
displacement $q$ of the moving plate with mass $m$, which is attached to a
spring ${k > 0}$ and a damper ${b>0}$, and affected by a mechanical force $F$, i.e., air pressure arising from sound. The capacitance increases when the distance between the plates becomes smaller. Furthermore, $E$ is an externally supplied voltage\footnote{In the audio industry this voltage is usually $48$ volts (DC) and is called phantom power.} and the resistor ${R>0}$ is used to convert the current into a voltage that is sent to an amplifier.    

\begin{figure}[h]
\begin{center}
\psfrag{E}[][]{$E$}
\psfrag{V}[][]{$V$}
\psfrag{F}[][]{$F$}
\psfrag{x}[][]{$q$}
\psfrag{q}[][]{$\dot{Q}$}
\psfrag{K}[][]{$k$}
\psfrag{B}[][]{$b$}
\psfrag{C}[][]{$C$}
\psfrag{R}[][]{$R$}
\psfrag{M}[][]{$m$}
\includegraphics[width=0.85\columnwidth]{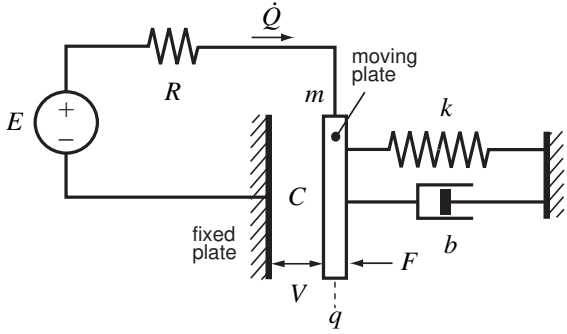}
\end{center}
\caption{Capacitor microphone.}
\label{fig:CapMic}
\end{figure}

Let $p$ be the momentum of the moving plate and $Q$ the electrical charge, then the equations of motion can be written as the port-Hamiltonian system \cite{passivitybook}
\begin{equation}
\label{eq4.2.65}
\begin{aligned}
\left[\begin{array}{rcl}
\dot{q} \\
\dot{p} \\
\dot{Q}
\end{array}
\right] & =
\left( \left[ 
\begin{array}{ccc}
0 & 1 & 0 \\ 
-1 & 0 & 0 \\ 
0 & 0 & 0
\end{array}
\right] -\left[ 
\begin{array}{ccc}
0 & 0 & 0 \\ 
0 & b & 0 \\ 
0 & 0 & 1/R
\end{array}
\right] \right)
\left[ 
\begin{array}{c}
\dfrac{\partial H}{\partial q} \\[3mm]
\dfrac{\partial H}{\partial p} \\[3mm]
\dfrac{\partial H}{\partial Q}
\end{array}
\right]\\ 
& \qquad +\left[ 
\begin{array}{c}
0 \\ 
1 \\ 
0
\end{array}\right] 
F+\left[ 
\begin{array}{c}
0 \\ 
0 \\ 
1\\
\end{array}
\right] I, \\[6mm] 
v &=\frac{\partial H}{\partial p}, \\[2mm]
V & = \frac{\partial H}{\partial Q},
\end{aligned}
\end{equation}
with $v=\dot{q}$ the velocity of the moving plate, where we have applied a Thevenin-Norton transformation \cite{Desoer} to convert the voltage source $E$ in series with the resistor $R$ into a parallel connection of a current source $I=E/R$ and a resistor with the same resistance. Furthermore, the Hamiltonian $H$ represents the total energy 
\begin{equation}
\label{eq4.2.66}
H(q,p,Q)=\frac{1}{2}kq^{2} + \frac{p^{2}}{2m}+\frac{Q^{2}}{2C(q)}
\end{equation} 
Hence, the power-balance is
\bq
\frac{d}{dt}H = - b\dot{q}^2 - RI^2 + Fv + IE  \leq Fv + IE,
\eq
with $Fv$ the mechanical power and $IE$ the electrical
power supplied to the system.\footnote{Note that the same model can be used for electro-mechanical micro-actuators (e.g., MEMS devices) that are controlled at the electrical port in order to produce a certain desired force at the mechanical port; see \cite{MEMS_book} for more details.} 

It follows from Theorem \ref{theorem} that the system for a constant capacitor voltage ${V=\bar{V}}$ is one-port cyclo-passive at the mechanical port $(F,v)$, with storage function given by the partial Legendre transform of $H$ with respect to $Q$, i.e.,
\bq
H^*_{Q}(q,p,\bar{V}) := \frac{1}{2}kq^2 + \frac{p^2}{2m} - \frac{1}{2}C(q)\bar{V}^2.
\eq
Furthermore, under the additional assumption that there exists a constant $\kappa$ such that $0\leq C(q) \leq \kappa$ for all $q$, then $H^*_{Q}$, as a function of $(q,p)$, is bounded from below, and thus the system is actually {\it one-port passive} at the mechanical port.

On the other hand, the system does {\it not} satisfy the sufficient conditions for one-port cyclo-passivity at the electrical port. In fact, it can be proven that the system is {\it not} one-port cyclo-passive at the electrical port for any non-zero constant mechanical velocity $\bar{v}$, provided that $C'(q) \neq 0$ (i.e., the capacitance non-trivially depends on $q$).  

Indeed, suppose there exists $K(q,Q)$ such that for $v = \bar{v} = \bar{p}/m \neq 0$, we have that
\bq\label{eq:dK/dt}
\frac{d}{dt}K = \frac{\partial K}{\partial q}\dot{q} + \frac{\partial K}{\partial Q}\dot{Q} \leq IV.
\eq
Since $\dot{q}=\bar{v}$ this implies
\bq
\frac{\partial K}{\partial q}\bar{v} + \frac{\partial K}{\partial Q}\left[- \frac{V}{R} + I \right] \leq IV
\eq
for all currents $I$, and thus
\bq\label{eq:dK_equality}
\frac{\partial K}{\partial Q} =V
\eq
and
\bq\label{eq:dK_inequality}
\frac{\partial K}{\partial q}\bar{v} - \frac{\partial K}{\partial Q}\frac{V}{R} \leq 0.
\eq
Now, the equality (\ref{eq:dK_equality}) implies $\frac{\partial K}{\partial Q} = \frac{\partial H}{\partial Q}$, and thus that
\begin{equation*}
K(q,Q)= \frac{Q^2}{2C(q)} +G(q)
\end{equation*}
for some function $G(q)$. Substitution into (\ref{eq:dK_inequality}) yields the differential inequality
\bq\label{eq:diff_inequality1}
\frac{d}{dq}\left[G(q) - C(q)\frac{V^2}{2} \right]\bar{v} \leq \frac{V^2}{R}
\eq
for all $q,V$. In the trivial case when $C(q)$ is constant, i.e., {\it not} depending on $q$, we can simply select ${G(q)=0}$ (or any arbitrary constant), in which case the required inequality is obviously true. However, for the case that $C(q)$ \emph{is} depending on $q$, the inequality (\ref{eq:diff_inequality1}) does not allow for a solution $G(q)$ for any $\bar{v} \neq 0$. Thus, there does \emph{not} exist a suitable storage function $K(q,Q)$ that satisfies (\ref{eq:dK/dt}) for arbitrary velocity $\bar{v} \neq 0$ and non-constant capacitance $C(q)>0$. 

Physically this is due to the fact that if $\bar{v} \neq 0$ the capacitor plate is moving at a constant speed. A moving plate under a constant excitation by the voltage source means that the charge is constantly increasing (or decreasing, depending on the sign of $\bar{v}$). At the same time, also the potential energy stored by the spring is changing but this is not caused by the electrical port.   Hence, from the perspective of the electrical port, when $\bar{v}> 0$ the system is \emph{creating} internal energy due to the increasing capacitance and the increase of the potential energy of the spring.

\subsection{Synchronous machine}
The standard model of a synchronous machine, see e.g. \cite{kundur}, can be naturally written into port-Hamiltonian form (see \cite{EJCFiaz} for details)
\bq
\label{sg}
\begin{aligned}
\begin{bmatrix}
\dot{\psi}_s \\[2mm]
\dot{\psi}_r \\[2mm]
\dot{p} \\[2mm]
\dot{\theta} 
\end{bmatrix} & =  
\begin{bmatrix}
- R_s & 0_{33} & 0_{31} &0_{31} \\[2mm]
0_{33} & -R_r & 0_{31} &0_{31} \\[2mm]
0_{13} & 0_{13} & -b & -1 \\[2mm]
0_{13} & 0_{13} & 1 & 0 \end{bmatrix}
\begin{bmatrix}
\dfrac{\partial H}{\partial \psi_s} \\[3mm]
\dfrac{\partial H}{\partial \psi_r} \\[3mm]
\dfrac{\partial H}{\partial p} \\[3mm]
\dfrac{\partial H}{\partial \theta} \end{bmatrix}\\ 
& \qquad +
\begin{bmatrix}
I_{3} & 0_{31} & 0_{31} \\[2mm] 
0_{33} & e_1 & 0_{31} \\[2mm] 
0_{13} & 0 & 1 \\[2mm] 
0_{13} & 0 & 0
\end{bmatrix} \! \begin{bmatrix} V_s \\[2mm] V_f \\[2mm] \tau \end{bmatrix}, 
\end{aligned}
\eq
together with the outputs
\bq
\label{sg_outputs}
\begin{aligned}
\begin{bmatrix}
I_s \\[2mm] I_f \\[2mm] \omega \end{bmatrix} & = 
\begin{bmatrix}
I_{3} & 0_{33} & 0_{31} & 0_{31} \\[2mm]
0_{13} & e_1^T & 0 & 0  \\[2mm]
0_{13} & 0_{13} & 1 & 0 \end{bmatrix}
\begin{bmatrix}
\dfrac{\partial H}{\partial \psi_s} \\[3mm]
\dfrac{\partial H}{\partial \psi_r} \\[3mm]
\dfrac{\partial H}{\partial p} \\[3mm]
\dfrac{\partial H}{\partial \theta} \end{bmatrix},
\end{aligned}
\eq
where $0_{lk}$ denotes the $l \times k$ zero matrix, $I_3$ denotes the $3 \times 3$ identity matrix, and $e_1$ is the first basis vector of $\mR^3$. Furthermore, $R_s$ and $R_r$ are positive diagonal $3 \times 3$ matrices, modeling the internal energy dissipation. The state variables of this $8$-dimensional model comprise
\begin{itemize}
\item
$\psi_s \in \mathbb{R}^3$, the stator fluxes;
\item
$\psi_r \in \mathbb{R}^3$, the rotor fluxes;
the first one corresponding to the field winding and the remaining two to the damper windings,
\item
$p \in \mathbb{R}$, the angular momentum of the rotor;
\item
$\theta \in \mathbb{R}$, the angle of the rotor.
\end{itemize}
Furthermore, $V_s, I_s \in \mathbb{R}^3$ are the three-phase
stator terminal voltages and currents, $V_f, I_f \in \mathbb{R}$ are the rotor field winding voltage and current, and $\tau, \omega \in \mathbb{R}$ are the mechanical torque and angular velocity; see Fig.~\ref{fig:1} for a schematic view. The synchronous machine is (like the DC-motor as treated before) a clear example of an energy-converting device. If the aim is to convert mechanical energy into electrical energy then it is called a synchronous {\it generator}, while if the aim is to convert electrical into mechanical energy it is called a synchronous {\it motor}. Fig.~\ref{fig:1} shows the operation as a synchronous generator.

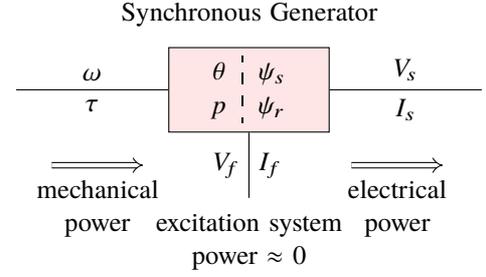
\begin{figure}[t]
\begin{center}
\begin{tikzpicture}[auto, node distance=2cm,>=latex', scale = 1.0, transform shape]

\tikzstyle{block} = [draw, fill=red!10, rectangle, 
    minimum height=3em, minimum width=6em]
\tikzstyle{position} = [coordinate]
\tikzstyle{pinstyle} = [pin edge={to-,thin,black}]

    \node (SG) {Synchronous Generator};
    \node [block, below of=SG, 
                node distance=1cm] (system) {\begin{tabular}{c:c}
                $\theta$ & $\psi_s$ \\ 
                $p$ & $\psi_r$ \\ 
                \end{tabular}
                };
    \node[position, left =2cm of system](left1){};
    \node[position, right =2cm of system](right1){};  
    \node[below of = system] (power) {\begin{tabular}{cc}  excitation system \\ power $\approx$ 0\end{tabular}};
    
    \draw [-] (left1) -- node[above] {$\omega$} node[below]{$\tau$}++ (system);    
    \draw [-] (right1) -- node[above] {$V_s$} node[below]{$I_s$} ++ (system);   
    \draw [-] (power) -- node[left, name=vf] {$V_f$} node[right, name=if]{$I_f$} ++ (system);

    \node[position, left = 2cm of vf](left2){};
    \node[position, left = 0.8cm of vf](left3){};
    \node[position, right = 0.8cm of if](right2){};
    \node[position, right = 2cm of if](right3){};
    
    \draw[-implies,double equal sign distance] (left2) -- node[below]{\begin{tabular}{cc}
        mechanical \\ power
        \end{tabular}
        } (left3); 
        \draw[-implies,double equal sign distance] (right2) -- node[below]{\begin{tabular}{cc}
            electrical \\ power
        \end{tabular}
        } (right3); 
\end{tikzpicture}
\end{center}
\caption{The state and port variables of the synchronous generator}
\label{fig:1}
 \end{figure}

The Hamiltonian $H$ of the synchronous machine is the sum of the magnetic energy of the field between stator and rotor, and the kinetic energy of the rotor, i.e.,
\bq
\label{eq5}
H(\psi_s, \psi_r, p, \theta) =  \frac{1}{2} 
\begin{bmatrix} \psi_s^T & \psi_r^T \end{bmatrix} L^{-1}(\theta) \begin{bmatrix} \psi_s \\ \psi_r \end{bmatrix} + 
 \frac{1}{2J_r}p^2,
 \eq
where $J_r>0$ is the rotational inertia of the rotor 
and $L(\theta) \succ 0$ is a $6 \times 6$ positive-definite symmetric inductance matrix; see \cite{EJCFiaz} for details.  

It is directly seen that the synchronous generator satisfies the conditions of Theorem \ref{theorem} for one-port cyclo-passivity at the mechanical port, with storage function given by the partial Legendre transform $H^*_{\psi_s}(I_s,\psi_r,p, \theta)$ of the Hamiltonian $H(\psi_s, \psi_r, p, \theta)$ with respect to the stator fluxes $\psi_s$. Partitioning the $6 \times 6$ inductance matrix $L(\theta)$ corresponding to $\psi_s, \psi_r$ as (while suppressing in the notation the dependence on $\theta$)
\[
L= \begin{bmatrix} L_{ss} & L_{sr} \\ L_{rs} & L_{rr}\end{bmatrix},
\]
this partial Legendre transform can be computed as
\bq
\begin{aligned}
H^*_{\psi_s}&(I_s,\psi_r,p, \theta) = 
\frac{1}{2J_r}p^2 +  \frac{1}{2}\psi_r^TL_{rr}^{-1}\psi_r \\
& - \frac{1}{2}I_s^T\left(L_{ss} - L_{sr}L_{rr}^{-1}L_{rs}\right)I_s + \psi_r^TL_{rr}^{-1}L_{rs}I_s. 
\end{aligned}
\eq
Here it can be noted that the Schur complement $L_{ss} - L_{sr}L_{rr}^{-1}L_{rs}$ is a positive-definite $3\times 3$ matrix (since the original inductance matrix $L$ is positive-definite), while also $L_{rr}^{-1}$ is positive-definite.

The physical interpretation of one-port cyclo-passivity at the mechanical port is obvious: if the stator currents $I_s$ are constant in time then no air-gap torque does arise, and hence there is no energy transfer from the electrical to the mechanical port. Furthermore, since the dependence of the elements of the inductance matrix $L(\theta)$ on the rotor angle $\theta$ is through products of $\cos$ and $\sin$ functions, these elements are bounded, implying that for constant $I_s$ the storage function $H^*_{\psi_s}(I_s,\psi_r,p, \theta)$ as a function of $\psi_r$, $p$ and $\theta$ is bounded from below. This means that actually the synchronous machine is one-port {\it passive} at the mechanical port for any constant $I_s$.

On the other hand, the synchronous machine does {\it not} satisfy the conditions of Theorem \ref{theorem} for one-port cyclo-passivity at the electrical port. In fact, lack of one-port cyclo-passivity at the electrical port is physically\footnote{Showing directly from the definition that the synchronous generator is not one-port cyclo-passive at the electrical port is not so obvious!} clear since in normal operation the synchronous generator continuously converts energy at the mechanical port for constant torque $\tau$ into electrical energy at the stator (to be supplied to the electrical grid).

\subsection{Synchronous machine in $dq$-coordinates}
Consider the synchronous machine \eqref{sg} as before. Apply the {\it Blondel-Park transformation} \cite{kundur, Machowski}:

\bq
\label{eq14}
\psi_{dq0} = \begin{bmatrix} \psi_d \\ \psi_q \\ \psi_0 \end{bmatrix} := \mathbb{T}_{dq0}(\theta) \psi_s,
 \eq
where 
\begin{equation*}
\mathbb{T}_{dq0}(\theta) = \sqrt{\frac{2}{3}}
\begin{bmatrix}
\cos \theta & \cos (\theta - \frac{2 \pi}{3}) &  \cos (\theta +  \frac{2 \pi}{3})\\
\sin \theta  & \sin (\theta - \frac{2 \pi}{3}) & \sin (\theta + \frac{2 \pi}{3}) \\
\frac{1}{\sqrt{2}} & \frac{1}{\sqrt{2}} & \frac{1}{\sqrt{2}} 
\end{bmatrix}.
\end{equation*}
Likewise, the voltages $V_s$ transform to $V_{dq0} := \mathbb{T}_{dq0}(\theta)V_s$, while the currents $I_s$ transform to $I_{dq0} := \mathbb{T}^{-T}_{dq0}(\theta)I_s =\mathbb{T}_{dq0}(\theta) I_s$. The Hamiltonian $H$ transforms to a Hamiltonian $\mathcal{H}$ in the new coordinates $(\psi_{dq0},\psi_r,p,\theta)$, whose magnetic part is given by
\bq
\label{eq16}
\mathcal{H}_m = \frac{1}{2} \begin{bmatrix} \psi_{dq0}^T & \psi_r^T \end{bmatrix} \mathcal{L}^{-1} \begin{bmatrix} \psi_{dq0} \\ \psi_r \end{bmatrix},
\eq
where $\mathcal{L}$ is a {\it constant} $6 \times 6$ matrix. 

It follows (see \cite{EJCFiaz, stegink} for details) that the model \eqref{sg} reduces to the $6$-dimensional port-Hamiltonian system
\bq
\label{sg_dq_state}
\begin{aligned}
\begin{bmatrix}
\dot{\psi}_{d}\\
\dot{\psi}_q \\
\dot{\psi}_r \\
\dot{p} 
\end{bmatrix} & = 
\begin{bmatrix}
- \begin{bmatrix}
r_s & 0 \\
0 & r_s
\end{bmatrix}
& 0_{23} & \begin{bmatrix} - \psi_q \\ \psi_d \end{bmatrix}  \\[4mm]
0_{32} & -R_r & 0_{31}  \\[3mm]
\begin{bmatrix} \psi_q & - \psi_d \end{bmatrix}& 0_{13} & -d  \end{bmatrix}
\begin{bmatrix}
\dfrac{\partial \widehat{\mathcal{H}}}{\partial \psi_d} \\[3mm]
\dfrac{\partial \widehat{\mathcal{H}}}{\partial \psi_q} \\[3mm]
\dfrac{\partial \widehat{\mathcal{H}}}{\partial \psi_r} \\[3mm]
\dfrac{\partial \widehat{\mathcal{H}}}{\partial p}  
\end{bmatrix}\\
& \qquad  
+ 
 \begin{bmatrix} I_2 & 0_{21} & 0_{21} \\ 0_{32} & e_1 & 0_{31} \\ 0_{12} & 0 & 1\end{bmatrix} 
 \begin{bmatrix} V_{dq} \\V_f \\ \tau \end{bmatrix}
\end{aligned}
\eq
together with the outputs
\bq
\label{sg_dq_output}
\begin{aligned}
\begin{bmatrix}
I_{dq} \\ I_f \\ \omega \end{bmatrix} & =
\begin{bmatrix}
I_{2} & 0_{23} & 0_{21} \\
0_{12} & e_1^T &0   \\
0_{12} &  0_{13} &1  \end{bmatrix}
\begin{bmatrix}
\dfrac{\partial \widehat{\mathcal{H}}}{\partial \psi_d} \\[3mm]
\dfrac{\partial \widehat{\mathcal{H}}}{\partial \psi_q} \\[3mm]
\dfrac{\partial \widehat{\mathcal{H}}}{\partial \psi_r} \\[3mm]
\dfrac{\partial \widehat{\mathcal{H}}}{\partial p}  \end{bmatrix} ,
\end{aligned}
\eq
where $V_{dq}, I_{dq} \in \mathbb{R}^2$ are the first two components of $V_{dq0}$, respectively $I_{dq0}$. Furthermore, the Hamiltonian $\widehat{\mathcal{H}}$ of the $6$-dimensional port-Hamiltonian system is equal in value to the Hamiltonian $\mathcal{H}$ of the original $8$-dimensional model minus a quadratic term in $\psi_0$.

Thus the Blondel-Park transformation eliminates the dependence of the magnetic energy on the rotor angle $\theta$ at the expense of the introduction of extra off-diagonal terms in the $J$-matrix in \eqref{sg_dq_state}. As a result the system does {\it not} satisfy anymore the conditions of Theorem \ref{theorem} for one-port cyclo-passivity at the mechanical port (contrary to the situation considered in the previous subsection). The physical reason is again obvious. Constant $I_{dq}$ means that there is an {\it alternating} current with constant {\it amplitude} at the stator side, which {\it does} produce a mechanical torque at the mechanical port. In fact, this is the standard operation of the synchronous machine as a synchronous {\it motor}; converting electrical energy into mechanical energy.

\subsection{Heat exchanger}\label{subsec:HEX}

As a final example, let us consider a single compartment heat exchanger in which thermal energy is exchanged between a \emph{cold} fluid stream, with enthalpy $H_c$ and mass flow rate $Q_c$, and a \emph{hot} fluid stream, with enthalpy $H_h$ and mass flow rate $Q_h$; see Fig.~\ref{fig:heatex}. This example is {\it not} an illustration of Theorem \ref{theorem}, but rather\footnote{Strictly speaking the system is {\it not} in the form \eqref{form} because of the off-diagonal terms in the $J$-matrix. Nevertheless, the same conclusion as in Remark \ref{remark} can be drawn due to the special form of the Hamiltonian.} of Remark \ref{remark}.

\begin{figure}[h]
\begin{center}
\psfrag{a}[r][]{$u_c = \dfrac{Q_c}{\rho_c V_c}$}
\psfrag{b}[][]{$y_c$}
\psfrag{c}[l][]{$u_h = \dfrac{Q_h}{\rho_h V_h}$}
\psfrag{d}[][]{$y_h$}
\includegraphics[width=0.6\columnwidth]{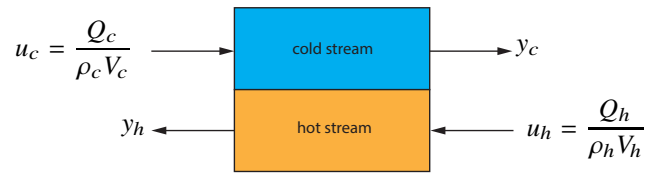}
\end{center}
\caption{Single compartment heat exchanger with volumes $V_c=V_h=:V$, densities $\rho_c$ and $\rho_h$, and mass flow rates $Q_c$ and $Q_h$. }
\label{fig:heatex}
\end{figure}

Let $T_c$ and $T_h$ represent the temperatures of the cold and hot streams, $H_c^\text{in}$ and $H_h^\text{in}$ the enthalpies at the respective inlets, and define the state of the heat exchanger as
\begin{equation}\label{eq:HEX_state}
x := \begin{bmatrix}
H_c\\
H_h
\end{bmatrix}.
\end{equation}
Following \cite{Zitte2019}, select the Hamiltonian $H = H_c + H_h$ and the total entropy $S = S_c + S_h$, with 
\begin{equation*} 
\frac{\partial S}{\partial x}(x) = 
\begin{bmatrix}
1/T_c\\
1/T_h
\end{bmatrix}.
\end{equation*}
Then, under standard ideality assumptions, the heat exchanger dynamics can be rewritten as the \emph{irreversible} port-Hamiltonian system (see \cite{Ramirez2013} and \cite{Zitte2019} for details)
\begin{equation}\label{eq:HEX_IPHS}
\begin{aligned}
\dot{x} &= R\left(x,\frac{\partial S}{\partial x}(x)\right)J
\frac{\partial H}{\partial x}(x)
\\[0.5em]  & \qquad 
+ \underbrace{\begin{bmatrix} -H_c + H_c^\text{in} \\ 0 \end{bmatrix}}_{G_c} u_c 
+ \underbrace{\begin{bmatrix} 0\\ -H_h + H_h^\text{in} \end{bmatrix}}_{G_h} u_h,
\end{aligned}
\end{equation} 
with inputs $u_i := \dfrac{Q_i}{\rho_i V}$, $i \in \{c,h\}$, $J=\begin{bmatrix} 0& 1\\-1& 0 \end{bmatrix}$ and
\begin{equation*}
R\left(x,\frac{\partial S}{\partial x}(x)\right) = \lambda \left(\frac{1}{T_c} - \frac{1}{T_h} \right),
\end{equation*}
where $\lambda$ denotes the (constant) heat transfer coefficient. 

Now, since $J$ is skew-symmetric the overall system is cyclo-lossless, i.e.,
\begin{equation}
\frac{dH}{dt}(x) = y_c u_c  + y_h u_h,  
\end{equation}
where the corresponding natural outputs are given by
\begin{equation}\label{eq:HEX_outputs}
y_i = G_i^T(x) \frac{\partial H}{\partial x}(x) = - H_i + H_i^\text{in}, \ i \in \{c,h\}.
\end{equation} 

Because of the off-diagonal terms in the $J$-matrix, the heat exchanger (\ref{eq:HEX_IPHS})--(\ref{eq:HEX_outputs}) does \emph{not} satisfy the structural conditions of Theorem \ref{theorem}. However, keeping the cold stream enthalpy constant, i.e, ${H_c = \bar{H}_c}$, and assuming $H_i > 0$, reveals that the heat exchanger is one-port \emph{passive} from the perspective of the hot stream. Indeed, $\bar{T}_c < T_h$ yields $R(\cdot) > 0$, implying
\begin{equation}\label{eq:heat_ineq_h}
\frac{dH}{dt}(\bar{H}_c,H_h) \leq  y_hu_h.
\end{equation}  

Conversely, the heat exchanger is \emph{not} one-port passive from the perspective of the cold stream, which is evident from the thermal power balance
\begin{equation}\label{eq:heat_ineq_c}
\frac{dH}{dt}(H_c,\bar{H}_h) = \underbrace{\lambda \left(\frac{1}{T_c} - \frac{1}{\bar{T}_h} \right)}_{> 0} + \, y_c u_c, 
\end{equation} 
for all $\bar{T}_h > T_c$. 

The physical reason in both cases is rather obvious: under the condition that ${T_c < T_h}$, the cold stream will always experience an increase of energy from the transfer of (free) energy from the hot stream, but never vice--versa. This is the working principle of a heat exchanger. Of course, in the special case ${T_c = T_h}$, there is no energy exchange since $R(\cdot) = 0$, which results in both ports being one-port lossless.  

\section{CONCLUSIONS AND OUTLOOK}
\label{sec:conc}

The main part of this paper is devoted to the generalization of the fact (as implied by Kelvin's formulation of the Second Law) that thermodynamic systems cannot convert thermal energy into mechanical energy (i.e., work) in a recurrent manner, while maintaining the same temperature (i.e., by using a single heat source). This results in the notion of one-port cyclo-passivity for general port-Hamiltonian systems with multiple ports. Sufficient conditions for one-port cyclo-passivity are derived in Theorem \ref{theorem}, where it is additionally shown that the storage function for one-port cyclo-passivity can be directly computed as a partial Legendre transform of the Hamiltonian. Restricting to such storage functions the sufficient conditions of Theorem \ref{theorem} are close to necessary as well. 
Thus Theorem \ref{theorem} formalizes fundamental limitations of energy conversion for general physical systems with multiple ports. This is illustrated on a wide variety of energy-converting systems.

On the other hand, again drawing inspiration from thermodynamics, we know that thermal energy {\it can} be converted into mechanical energy by the use of at least two heat sources, as evidenced by the Carnot cycle. Thus the next question to be addressed is how in a similar way one-port cyclo-passive port-Hamiltonian systems {\it can} convert energy from one port to another in a recurrent manner, by using more than one set-point value for the output of the first port. One option is to mimic the Carnot cycle by combining Theorem \ref{theorem} (corresponding to {\it isothermal} processes) with Remark \ref{remark} (corresponding to {\it isentropic} (adiabatic) processes); see already the end of Subsection \ref{gas}. This naturally leads to the question of {\it maximal efficiency} of port-Hamiltonian systems with two ports. Another option, as exemplified by the synchronous motor case, is to use periodic set-point functions. These questions are left for future research.
%

\renewcommand\refname{REFERENCES}

\end{document}